\documentclass[12pt]{amsart}
\usepackage[margin=.8in]{geometry}
\usepackage[scr=rsfs]{mathalpha}
\usepackage{xspace}
\usepackage{amsfonts}
\usepackage{amsthm}
\usepackage{amssymb}
\usepackage{times} 
\usepackage{graphicx}
\usepackage{hyperref}
\usepackage{tikz}
\usepackage{comment}
\usetikzlibrary{arrows}
\usepackage[all]{xy}
\usepackage{tikz-cd}
\usepackage{enumerate}

\usepackage{mathtools}

\date{}

\numberwithin{equation}{section}
\newtheorem{thm}{Theorem}[section]

\tikzset{node distance=5cm, auto}

\newtheorem{lemma}[thm]{Lemma}
\newtheorem{conjecture}[thm]{Conjecture}

\newtheorem{remark}[thm]{Remark}

\makeatletter
\newcommand{\etale}{\'etal\@ifstar{\'e}{e\xspace}}
\makeatother
\usepackage[OT2,T1]{fontenc}
\DeclareSymbolFont{cyrletters}{OT2}{wncyr}{m}{n}
\DeclareMathSymbol{\Sha}{\mathalpha}{cyrletters}{"58}

\begin{document}
\title{Totally ramified subfields of $p$-algebras  over  discrete valued fields with imperfect residue}
\author{S. Srimathy}
\address{School of Mathematics, Tata Institute of Fundamental Research, Mumbai, 400005, India }
\email{srimathy@math.tifr.res.in}
\begin{abstract}
    Let $K$ be a complete discrete valued field of characteristic $p$ with residue $k$ which is not necessarily perfect.  We prove the Conjecture in \cite{cs} that a $p$-algebra over $K$ contains a totally ramified cyclic maximal subfield  if it contains a totally ramified purely inseparable maximal subfield provided $k$ satisfies some conditions on its  $p$-rank.
\end{abstract}
\maketitle
\section{Introduction}
Let $K$ denote a complete discrete valued field of chacteristic $p$ with residue $k$. The following conjecture appeared in \cite{cs}:
\begin{conjecture} (\cite{cs})\label{conj:conj}
Let $A$ be a $p$-algebra over $K$. Then $A$ contains a totally ramified  cyclic maximal subfield if and only if it contains a totally ramified purely inseparable maximal subfield.
\end{conjecture}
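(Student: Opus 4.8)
The plan is to first translate both conditions into intrinsic invariants of the unique valuation $w$ on $A$ extending that of $K$. Writing $\deg A = p^n$, one has $w(x) = \tfrac{1}{p^n}v(\mathrm{Nrd}(x))$, so $\Gamma_A \subseteq \tfrac{1}{p^n}\mathbb{Z}$; if $A$ contains a totally ramified maximal subfield $L$ of either type then $\Gamma_L = \tfrac{1}{p^n}\mathbb{Z}$ is squeezed between $\Gamma_K=\mathbb{Z}$ and $\Gamma_A$, forcing $\Gamma_A = \tfrac{1}{p^n}\mathbb{Z}$. I would then show that for a $p$-algebra this makes $A$ a (wildly) totally ramified division algebra with residue $\bar A = k$. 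In particular a totally ramified purely inseparable maximal subfield must be monogenic: taking $\theta$ with $w(\theta)=1/p^n$ gives $K(\theta)=M$, and purely inseparability forces $\theta^{p^n}=c\in K$ with $w(c)=1$, so after rescaling $M = K(\pi^{1/p^n})$ for a uniformizer $\pi$. This reduces the conjecture to a statement about totally ramified division $p$-algebras.

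For the direction cyclic $\Rightarrow$ purely inseparable (the more flexible one, likely already available in \cite{cs}), I would work inside the totally ramified division algebra and produce an element $z$ with $w(z)=1/p^n$ and $z^{p^n}\in K$; then $K(z)$ is a totally ramified purely inseparable maximal subfield. The existence of such a $z$ should follow from the abelian structure of $\Gamma_A/\Gamma_K$ together with the structure theory of totally ramified division $p$-algebras, and I expect this direction to cost comparatively little.

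For the substantive direction, purely inseparable $\Rightarrow$ cyclic, I would pass to Kato's description of the $p^n$-torsion of $\mathrm{Br}(K)$ in characteristic $p$ via Artin--Schreier--Witt symbols: each class is a sum of symbols $[\vec a, b)$ with $\vec a \in W_n(K)$ and $b\in K^\times$, where $K(\wp^{-1}\vec a)/K$ is the cyclic degree-$p^n$ extension and $K(b^{1/p^n})$ the purely inseparable one. Since $M = K(\pi^{1/p^n})$ splits $A$, I would first arrange a presentation $[A]=[\vec a, \pi)$ with the uniformizer in the inseparable slot. The goal is then to rewrite this class, using additivity in the Witt slot, logarithmic additivity in the other slot, and the Steinberg and reciprocity relations, as a single symbol $[\vec a\,', \pi')$ in which the components of $\vec a\,'$ have poles of order prime to $p$ — exactly the condition making $K(\wp^{-1}\vec a\,')$ totally ramified — while $\vec a\,'$ still splits over $M$, so that $K(\wp^{-1}\vec a\,')$ embeds in $A$ as a totally ramified cyclic maximal subfield.

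The main obstacle, and the reason the $p$-rank of $k$ enters, is controlling ramification simultaneously at all $n$ levels of the Witt vector. Removing the $p$-divisible part of the pole order at each Witt component, while preserving both the Brauer class and total ramification, amounts to solving Artin--Schreier--Witt congruences whose solvability depends on having enough $p$-independent elements in $k$; a $p$-rank that is too small obstructs pushing the entire wild part into the cyclic generator. I therefore expect the bulk of the proof to be an induction on $n$, at each level invoking the $p$-rank hypothesis to choose residues realizing the required leading terms, and finally verifying that the constructed cyclic extension is both totally ramified and a splitting maximal subfield of $A$.
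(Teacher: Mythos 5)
Your opening reduction contains a concrete error that undermines everything built on it. From $w(x)=\frac{1}{p^n}v(\mathrm{Nrd}(x))$ you correctly get $\Gamma_A\subseteq \frac{1}{p^n}\mathbb{Z}$, hence $\Gamma_A=\frac{1}{p^n}\mathbb{Z}$ whenever a totally ramified maximal subfield exists; but this makes it \emph{impossible} for $A$ itself to be totally ramified, since that would require $[\Gamma_A:\Gamma_K]=[A:K]=p^{2n}$, so your two claims contradict each other. Nor is $\overline{A}=k$: by the fundamental equality $[A{:}K]=[\Gamma_A{:}\Gamma_K][\overline{A}{:}k]$ (in the defectless case), $[\Gamma_A:\Gamma_K]=p^n$ forces $[\overline{A}:k]=p^n$, i.e.\ $A$ is \emph{semiramified}. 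Lemma~\ref{lem:semiramified} of the paper records exactly this, and a concrete counterexample to your claim is $[\omega,b)$ with $K_\omega/K$ inertial cyclic of degree $p^n$ and $b$ a uniformizer: it contains the totally ramified purely inseparable maximal subfield $K(\sqrt[p^n]{b})$, yet its residue is the degree-$p^n$ field $k_\omega$, not $k$. Consequently your sketch of the cyclic $\Rightarrow$ purely inseparable direction, which leans on "structure theory of totally ramified division $p$-algebras," has no foundation (that direction is indeed proved in \cite{cs}, but not this way).

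The more serious problem is that the substantive direction is not actually proved: the step of rewriting $[A]=[\vec a,\pi)$ as a single symbol $[\vec a\,',\pi')$ whose Witt components have prime-to-$p$ pole orders is precisely the hard point, and you leave it as something you "expect" to do by induction, conceding that its solvability requires $p$-rank hypotheses on $k$ --- hypotheses that do not appear in the conjecture. So even if completed, your argument would at best recover a statement like Theorem~\ref{thm:conj}; the conjecture itself remains open, and the paper likewise only proves these special cases. It is also worth noting that the paper's mechanism is entirely different from your symbol-pushing plan and deliberately avoids delicate Artin--Schreier--Witt manipulations over a field with imperfect residue: it constructs \emph{weakly unramified} cyclic extensions $L_1,L_2/K$ of degree $p^m$ with $l_1\cap l_2=k$ (Theorem~\ref{thm:cyclic}, built inductively from Albert's theorem via equations $x_i^p-x_i=a_i/t^{p^{n_i}}+\alpha_{i-1}$, producing cyclic extensions with purely inseparable residue), forms auxiliary division algebras $D_i=[\omega_i,b)$ that share the totally ramified purely inseparable subfield $K(\sqrt[p^m]{b})$ with $A$, and then invokes the linkage theorem \cite[Theorem 4.7]{CFM} to get a common cyclic maximal subfield $L$ of $A$, $D_1$, $D_2$. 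Total ramification of $L$ is never arranged by hand; it is forced after the fact because $l\subseteq \overline{D_1}\cap\overline{D_2}=l_1\cap l_2=k$. If you want to salvage your approach, the usable part is the dictionary $A\simeq[\omega,b)$ with $K(\sqrt[p^m]{b})$ totally ramified (that is \cite[Lemma 5.1]{cs}); what must replace your unproven rewriting step is some mechanism like this linkage argument.
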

The "only if" part of the conjecture is completely proved in \cite{cs} and the "if" part is proved for many cases.  In  this paper, we prove the "if" part of the conjecture  for two more cases:
\begin{thm}\label{thm:conj}
    Let $A$ be a $p$-algebra  of degree $p^m, m>1$ over $K$. Suppose $A$ contains a totally ramified purely inseparable maximal subfield. Then it contains a totally ramified cyclic maximal subfield if one of the following  conditions holds:
    \begin{enumerate}
\item $rank_p(k) \geq 2m$
\item $rank_p(k) \geq  m$ and $dim_{\mathbb{F}_p}(k/\mathcal{P}(k)) \geq 1$
 \end{enumerate}

where $rank_p(k)$ denotes the $p$-rank of $k$ and $\mathcal{P}$ denotes the Artin-Schreier operator.
\end{thm}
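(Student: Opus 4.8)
The plan is to work concretely with the complete field $K\cong k((t))$ (Cohen structure theorem, equal characteristic $p$) and to produce the desired subfield by a single explicit ``generic'' element, imitating the degree-$p$ mechanism. First I would normalize the given data: after rescaling the uniformizer, the totally ramified purely inseparable maximal subfield may be taken to be $L=K(t^{1/p^m})$. Using the description of $p$-algebras split by such an $L$ (the framework of \cite{cs}), I would present $A$ as a cyclic $p$-algebra $(\underline a,t]$ of degree $p^m$ attached to a length-$m$ Witt vector $\underline a=(a_0,\dots,a_{m-1})\in W_m(K)$: it is generated by the Artin--Schreier--Witt element $\theta$ with $\mathcal P(\theta)=\underline a$ (so $M=K(\theta)=K_{\underline a}$ is the cyclic maximal subfield) together with $v$ satisfying $v^{p^m}=t$ and $v\theta v^{-1}=\sigma(\theta)$. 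The target is then reformulated: I want a second presentation $(\underline a',t']\cong A$ in which $K_{\underline a'}$ is totally ramified of degree $p^m$, realized by an explicit element of $A$.

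The engine is the observation, already visible for $m=1$, that adding a multiple of the ramified generator $v$ to the Artin--Schreier generator injects a pole into the Artin--Schreier class: if $w=\theta+\mu v$ then $\mathcal P(w)=\underline a+\mu^{p}t+(\text{lower order})$ up to Witt/Jacobson corrections, and choosing $v(\mu)<0$ suitably forces a pole of order prime to $p$. Accordingly I would look for the totally ramified cyclic subfield as $K(\Theta)$, where $\Theta$ is a length-$m$ Witt vector built from $\theta$ and from level-by-level corrections $\mu_0 v,\dots,\mu_{m-1}v$ with coefficients $\mu_i=c_i t^{-e_i}$, $c_i\in k^\times$. Computing $\mathcal P(\Theta)\in W_m(K)$ by the Witt addition and Jacobson $p$-th-power formulas, the leading term at level $i$ is governed by $c_i$ and the exponent $e_i$; by choosing the $e_i$ prime to $p$ and strictly increasing in the sense dictated by the Artin--Schreier--Witt ramification filtration, $K(\Theta)/K$ becomes totally ramified with successive breaks realizing the full degree $p^m$. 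One then checks, via the standard criterion for total ramification of Artin--Schreier--Witt extensions in terms of the pole orders of the reduced Witt vector, that $K(\Theta)$ is a (cyclic) maximal subfield of $A$ and is totally ramified.

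The role of the hypotheses on $k$ is to guarantee that the $m$ new leading coefficients genuinely produce $m$ distinct ramification breaks rather than collapsing: because Witt addition mixes the components nonlinearly through $p$-th powers, two coefficients that are $p$-dependent in $k$ can cause a break to disappear (the pole cancels modulo $\mathcal P(K)+K^{p}$), lowering the degree below $p^m$. Requiring the chosen residues to be $p$-independent removes this degeneracy and is what yields the $p$-rank thresholds. In case (1) both the pole-creating coefficients and the auxiliary coefficients needed to keep the corrections compatible with the cyclic Galois action must be chosen independently, costing $2m$ independent elements of $k$; in case (2) a single nontrivial Artin--Schreier class of $k$ (guaranteed by $dim_{\mathbb F_p}(k/\mathcal P(k))\ge 1$) absorbs the residual unramified part of $\underline a$, so that only the $m$ pole-creating coefficients remain, whence $rank_p(k)\ge m$ suffices.

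The main obstacle I anticipate is exactly the control of the wild ramification across Witt levels: unlike the tame case, the break sequence of $K_{\underline a'}/K$ is not read off additively from the components, and the Jacobson correction terms coupling $\theta$ and $v$ can contribute poles that interfere with the intended leading poles. The crux of the argument will therefore be a careful valuation bookkeeping showing that, under the stated $p$-independence, the intended pole of order $e_i$ at each level dominates all correction terms and is prime to $p$, so that the reduced Witt vector has strictly increasing breaks and the extension attains full ramification $p^m$ while remaining inside $A$.
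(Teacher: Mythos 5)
Your proposal correctly isolates the degree-$p$ engine: in $[a,b)$ with $\mathcal{P}(x)=a$, $y^p=b$, $yxy^{-1}=x+1$, Jacobson's $p$-th power formula gives $\mathcal{P}(x+\mu y)=a+\mu^p b$, so a pole of order prime to $p$ can be injected by hand and the degree-$p$ case follows. But the theorem is stated for $m>1$, and there your central object $\Theta$ --- ``a length-$m$ Witt vector built from $\theta$ and from level-by-level corrections $\mu_i v$'' --- is not even well-defined: Witt-vector addition and the Artin--Schreier--Witt operator $\mathcal{P}$ only make sense for commuting entries, whereas the components of $\theta$ do not commute with $v$ (conjugation by $v$ shifts $\theta$ by the Witt vector $(1,0,\dots,0)$), and the proposed entries $\theta_i+\mu_i v$ do not commute with one another. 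There is no higher-level analogue of the Jacobson computation that turns $\mathcal{P}(\Theta)$ into a Witt vector over $K$ with controllable components; this is precisely the obstruction that makes the conjecture nontrivial beyond degree $p$, and your sketch defers it (``careful valuation bookkeeping'') rather than resolves it. Likewise, your account of where the hypotheses enter --- $p$-independence preventing ramification breaks from ``collapsing,'' with $2m$ elements needed for ``auxiliary coefficients compatible with the Galois action'' --- is not attached to any computation and does not correspond to a mechanism you have exhibited.

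The paper's proof avoids in-algebra Witt manipulation entirely, and its use of the hypotheses is quite different from what you describe. It first shows (Theorem \ref{thm:cyclic}, by induction using Albert's theorem with equations of the form $x_i^p-x_i=a_i/t^{p^{n_i}}+\alpha_{i-1}$) that $m$ $p$-independent residues $\overline{a_1},\dots,\overline{a_m}$ admit a \emph{weakly unramified} cyclic lift $L/K$ of degree $p^m$ whose residue is the purely inseparable field $k(\sqrt[p]{\overline{a_1}},\dots,\sqrt[p]{\overline{a_m}})$. Hypotheses (1) and (2) serve only to produce two such cyclic extensions $L_1,L_2$ with $l_1\cap l_2=k$: in case (1) by splitting $2m$ independent elements into two groups of $m$, in case (2) by taking $L_2$ to be the inertial lift of a cyclic extension of $k$ obtained from an Artin--Schreier class. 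Then, writing $A\simeq[\omega,b)$ with $\mathfrak{v}(b)$ prime to $p$, one forms $D_i=[\omega_i,b)$, which are division algebras (Lemma \ref{lem:division}) with $\overline{D_i}=l_i$ (Lemma \ref{lem:semiramified}), and invokes the linkage theorem of \cite[Theorem 4.7]{CFM}: since $A$, $D_1$, $D_2$ share the purely inseparable maximal subfield $K(\sqrt[p^m]{b})$, they share a cyclic maximal subfield $L$, whose residue then lies in $l_1\cap l_2=k$, forcing $L/K$ to be totally ramified. To salvage your approach you would essentially have to reprove that linkage theorem by explicit elements, which is exactly the step your proposal leaves open; as written, it has a genuine gap at its core.
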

In other words, the theorem states that the conjecture is true it $k$ admits  at least $2m$ linearly disjoint purely inseparable extensions of degree $p$  or if it admits   at least $m$ linearly disjoint purely inseparable extensions of degree $p$ and an Artin-Schreier extension.  \\
\indent The idea of the the proof involves first showing the existence of cyclic lifts of  purely inseparable extensions of exponent one over the residue .  Once we find such cyclic lifts, we construct suitable $p$-division algebras that  contain these cyclic lifts and share the  same totally ramified purely inseparable maximal subfield with $A$.  Then we use linkage results of \cite{CFM} to show the existence of totally ramified cyclic maximal subfields.
\section{Notations}
All the fields in this paper have characteristic $p$. For a field $F$, the set of non-zero elements of $F$ is denoted by $F^{\times}$.  Given a field extension $L/K$, the symbol $N_{L/K}$ denotes the norm function  of $L$ over $K$. The symbol $W_n(F)$ denotes the truncated Witt vector of length $n$ over $F$. 
 \\
 \indent Fields with discrete valuations are denoted with upper case alphabets and their residue fields are  denoted by corresponding lower case alphabets.  We denote the valuation ring, its maximal ideal and the value group of  a discrete valued field $K$ by $\mathcal{O}_K$, $\mathfrak{m}_K$ and $\Gamma_K$ respectively.   The set of natural numbers is denoted by $\mathbb{N}$.  The greatest common divisor of $m,n \in \mathbb{N}$ is denoted by $(m,n)$. 
\section{Preliminaries} \label{sec:prelim}
\subsection{Albert's theorem}
Let $F$ be a field of characteristic $p$. Let $\mathcal{P}$ denote the Artin-Schreier operator 
\begin{align*}
    \mathcal{P}: F \rightarrow F\\
                  x \mapsto x^p -x
\end{align*}
It is well known that cyclic degree $p$ extensions of $F$ upto isomorphism are in bijection with the cyclic subgroups of $F/\mathcal{P}(F)$ of order $p$ (\cite[Chapter VI, Theorem 8.3]{lang_algebra}). Let $W_n(F)$  denote the group of truncated Witt vectors of length $n$ over $F$. Given any  $\omega \in W_n(F)$, one can construct  a cyclic extension over $F$, denoted by $F_{\omega}/F$.  Conversely, given any cyclic extension  of degree $p^n$, one can associate a Witt vector of length $n$.  These are well known (\cite[Chapter III]{Jacobson:1964}, \cite{lara_thesis}).  An explicit way to construct a cyclic extension of degree $p^{n+1}$  containing any given cyclic  extension  of degree $p^n$ is due to Albert which we recall  below.
\begin{thm}(\cite[Lemma 7]{albert_cyclic}, \cite[Theorem 4.2.3]{jacobson})\label{thm:albert}
Let $F$ be a field of characteristic $p$ and let  $E/F$ be a cyclic extension of degree $p^e, e \geq 1$ with $Gal(E/F) = <\sigma>$. Then $E$ contains an element $\beta$ such that $Tr_{E/F}(\beta) =1$ and if $\beta$ is such an element, then there exists an $\alpha \in E$ such that 
\begin{align*}
    \mathcal{P}(\beta) = \sigma(\alpha) - \alpha.
\end{align*}
 Then $x^p -x - \alpha$ is irreducible in $E[x]$ and if $\gamma$ is a root of this polynomial, then $E' = E[\gamma]$ is a cyclic field of degree $p^{e+1}$ over $F$ and any such extension can be obtained this way.   
\end{thm}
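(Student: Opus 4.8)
The statement bundles four claims—the existence of a trace-one element $\beta$, the solvability of $\mathcal{P}(\beta)=\sigma(\alpha)-\alpha$, the irreducibility of $x^p-x-\alpha$, and the cyclicity of $E'$—and I would attack them in that order, each resting on a standard structural fact about cyclic extensions in characteristic $p$. For $\beta$: since $E/F$ is separable, the trace form is nondegenerate, so $Tr_{E/F}\colon E\to F$ is surjective, and rescaling any element of nonzero trace produces $\beta$ with $Tr_{E/F}(\beta)=1$. To solve $\mathcal{P}(\beta)=\sigma(\alpha)-\alpha$ I would invoke the additive form of Hilbert 90: for $\mathrm{Gal}(E/F)=\langle\sigma\rangle$, an element $z\in E$ has the form $\sigma(\alpha)-\alpha$ iff $Tr_{E/F}(z)=0$. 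Thus it suffices to check $Tr_{E/F}(\mathcal{P}(\beta))=0$, and the key point is the additivity of Frobenius in characteristic $p$: $Tr_{E/F}(\beta^p)=\sum_i\sigma^i(\beta)^p=\bigl(\sum_i\sigma^i(\beta)\bigr)^p=(Tr_{E/F}\beta)^p=1$, whence $Tr_{E/F}(\beta^p-\beta)=0$ and $\alpha$ exists.

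For irreducibility I would use the Artin--Schreier dichotomy: over $E$ the polynomial $x^p-x-\alpha$ either splits or is irreducible, and it is irreducible exactly when $\alpha\notin\mathcal{P}(E)$. Assuming $\alpha=\mathcal{P}(u)$ for some $u\in E$ and applying $\sigma-\mathrm{id}$, the additivity of $\mathcal{P}$ gives $\mathcal{P}(\sigma(u)-u)=\sigma(\alpha)-\alpha=\mathcal{P}(\beta)$, so $\sigma(u)-u-\beta\in\ker\mathcal{P}=\mathbb{F}_p$. Taking $Tr_{E/F}$, using $Tr_{E/F}(\sigma(u)-u)=0$ and $Tr_{E/F}(c)=p^e c=0$ for $c\in\mathbb{F}_p$ (as $e\ge 1$), forces $Tr_{E/F}(\beta)=0$, contradicting $Tr_{E/F}(\beta)=1$. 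Hence $\alpha\notin\mathcal{P}(E)$, so $[E':E]=p$ and $[E':F]=p^{e+1}$.

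To establish that $E'/F$ is cyclic I would exhibit a generating automorphism directly rather than argue normality abstractly. From $\gamma^p-\gamma=\alpha$ one computes $\mathcal{P}(\gamma+\beta)=\alpha+\mathcal{P}(\beta)=\sigma(\alpha)$, so $\gamma+\beta$ is a root of $x^p-x-\sigma(\alpha)$, which is precisely $\sigma$ applied to the minimal polynomial of $\gamma$. This lets me extend $\sigma$ to $\tau\in\mathrm{Aut}(E'/F)$ by $\tau|_E=\sigma$ and $\tau(\gamma)=\gamma+\beta$. An easy induction gives $\tau^n(\gamma)=\gamma+\sum_{i=0}^{n-1}\sigma^i(\beta)$, so $\tau^{p^e}(\gamma)=\gamma+Tr_{E/F}(\beta)=\gamma+1\ne\gamma$. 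Therefore $\tau^{p^e}$ has order $p$, the order of $\tau$ does not divide $p^e$ yet divides $p^{e+1}$, so $\mathrm{ord}(\tau)=p^{e+1}=[E':F]$, forcing $E'/F$ to be Galois with cyclic group $\langle\tau\rangle$.

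For the converse, given any cyclic $E'\supseteq E$ of degree $p^{e+1}$ with generator $\tau$ restricting to $\sigma$ on $E$, the unique order-$p$ subgroup is $\langle\tau^{p^e}\rangle$, so $E'/E$ is Artin--Schreier and I may normalize $\gamma$ so that $\gamma^p-\gamma=\alpha\in E$ and $\tau^{p^e}(\gamma)=\gamma+1$; setting $\beta=\tau(\gamma)-\gamma$, a telescoping sum gives $Tr_{E/F}(\beta)=\sum_{i=0}^{p^e-1}(\tau^{i+1}(\gamma)-\tau^i(\gamma))=\tau^{p^e}(\gamma)-\gamma=1$, while $\tau^{p^e}$-invariance shows $\beta\in E$, so the construction reproduces $E'$. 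I expect the cyclicity step to be the crux: the trace-one normalization of $\beta$ is exactly what prevents $\tau$ from collapsing to order $p^e$, and the delicate points are checking that $\tau$ is well defined via the explicit root $\gamma+\beta$ and, in the converse, verifying that the naturally chosen $\beta=\tau(\gamma)-\gamma$ automatically descends to $E$.
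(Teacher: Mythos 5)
The paper itself gives no proof of this statement---it is quoted as a known result from \cite{albert_cyclic} and \cite{jacobson}---and your argument is a correct, complete reconstruction of precisely that classical proof: trace surjectivity from separability, additive Hilbert 90 applied to $\mathcal{P}(\beta)$ (using $Tr_{E/F}(\beta^p)=(Tr_{E/F}\beta)^p$), the Artin--Schreier dichotomy for irreducibility, and the explicit extension $\tau(\gamma)=\gamma+\beta$ whose order is forced to be $p^{e+1}$ exactly because $\tau^{p^e}(\gamma)=\gamma+Tr_{E/F}(\beta)=\gamma+1$. The only hairline omission is in the converse: to conclude that the construction ``reproduces'' $E'$ you should also record the one-line check that $\mathcal{P}(\beta)=\mathcal{P}(\tau(\gamma))-\mathcal{P}(\gamma)=\tau(\alpha)-\alpha=\sigma(\alpha)-\alpha$, so that your $\beta$, $\alpha$, $\gamma$ satisfy all the relations demanded by the statement.
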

The following  remark is obvious: 
\begin{remark} \normalfont \label{lem:albert}
    In the above theorem, $\alpha$ can be replaced with $c+\alpha$ for any $c\in F$ and the resulting extension is still cyclic  of degree $p^{e+1}$ over $F$.  In other words, the  extension $E[\gamma]$ where $\gamma$ is a root of $x^p -x - (c+\alpha)$ yields a cyclic extension of degree $p^{e+1}$ over $F$.
\end{remark}

\subsection{$p$-rank of a field}
Let $F$ be a field of characteristic $p$.  A set of elements $\{x_1, x_2, \cdots, x_n\}$ in $F$ is said to be \emph{$p$-independent} over $F^p$ if $[F_p(x_1, x_2, \cdots, x_n):F^p] = p^n$. This is equivalent to saying that $[F_p(x_i):F_p] =p$ and that $F_p(x_i)$ are linearly disjoint over $F_p$. If moreover, $F_p(x_1, x_2, \cdots, x_n) = F$  we say that $\{x_1, x_2, \cdots, x_n\}$ is a \emph{$p$-basis} for $F/F^p$.  We have that  $[F:F^p] = p^n$ where the integer $n$ is called the  \emph{$p$-rank (a.k.a the $p$-dimension)} of $F$, denoted by $rank_p(F)$. If $[F:F^p]$ is infinite, we set $rank_p(F)= \infty$. 

\subsection{A sufficient condition for a cyclic $p$-algebra to be a division algebra}
\indent Let $K$ be a complete discrete valued field with valuation $\mathfrak{v}$ and residue $k$. We say that an extension $L/K$ is \emph{weakly unramified} if the residue extension satisfies $[l:k]=[L:K]$ (Here we do not assume separability of $l/k$). Let $D$ be a division algebra over $K$.   The valuation on $K$ extends uniquely to a valuation on $D$ (\cite[Corollary 2.2]{wadsworth}).  Let $\overline{D}$ denote the residue algebra of $D$ and $\Gamma_D$ denote the value group of $D$. Denote the degree of $D$ over $K$ by $[D:K]$.
We now observe the following:
\begin{lemma}\label{lem:semiramified}
    Suppose $D$ contains a maximal subfield $L$ that is weakly unramified over $K$ and a maximal subfield that is totally ramified. Then, $\overline{D}= l$ where $l$ is the residue field of $L$.
\end{lemma}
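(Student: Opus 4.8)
The plan is to play the two maximal subfields off against each other through the fundamental inequality of valuation theory, bounding the residue degree and the ramification of $D$ from below until they together saturate the dimension of $D$. Write $n=[D:K]$ for the degree of $D$, so that every maximal subfield of $D$ has degree $n$ over $K$ and $\dim_K D = n^2$. Let $M$ denote the given totally ramified maximal subfield. Since the valuation on $D$ is the unique extension of the one on $K$ and restricts to the (unique) valuations on the subfields $L$ and $M$, I get compatible inclusions $\mathcal{O}_L,\mathcal{O}_M\subseteq\mathcal{O}_D$ respecting maximal ideals and value groups; this compatibility is what lets the invariants of $L$ and $M$ be read off inside $\overline{D}$ and $\Gamma_D$.

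First I would extract the two lower bounds. Reducing $\mathcal{O}_L\subseteq\mathcal{O}_D$ modulo maximal ideals (using $\mathfrak{m}_D\cap\mathcal{O}_L=\mathfrak{m}_L$) embeds the residue field $l$ into $\overline{D}$, and weak unramification gives $[l:k]=[L:K]=n$, whence $[\overline{D}:k]\ge n$. On the other side, the value group $\Gamma_M$ sits inside $\Gamma_D$, and total ramification of $M$ gives $|\Gamma_M:\Gamma_K|=[M:K]=n$, whence $|\Gamma_D:\Gamma_K|\ge n$. The point is that $L$ contributes purely to the residue and $M$ purely to the value group.

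Next I would invoke the fundamental inequality for the valued division algebra $D$ over the complete, hence Henselian, field $K$, namely $[\overline{D}:k]\cdot|\Gamma_D:\Gamma_K|\le\dim_K D=n^2$. Combining this with the two bounds yields
$$n^2 \ \ge\ [\overline{D}:k]\cdot|\Gamma_D:\Gamma_K| \ \ge\ n\cdot n \ =\ n^2,$$
so every inequality is forced to be an equality. In particular $[\overline{D}:k]=n=[l:k]$, and since $l$ is already a $k$-subspace of $\overline{D}$ of full dimension, this gives $\overline{D}=l$.

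The argument is short, so there is no single hard computation; what needs care is the bookkeeping rather than any real obstacle. I would make sure that (i) the fundamental inequality is applied in the correct direction and does not secretly require defectlessness — it is only an inequality, and any defect merely enlarges $\dim_K D$, so the sandwich still closes; and (ii) the final dimension count legitimately upgrades the a priori possibly noncommutative residue division algebra $\overline{D}$ to the field $l$, which is immediate once the $k$-dimensions agree. If I had to name the key idea, it is the exact numerical coincidence $[L:K]\cdot[M:K]=\dim_K D$ obtained by pairing a residue-contributing subfield with a ramification-contributing subfield, which is precisely what makes the inequality collapse to an equality.
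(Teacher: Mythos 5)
Your proof is correct and follows essentially the same route as the paper: the weakly unramified maximal subfield forces $[\overline{D}:k]\geq n$ via $l\subseteq\overline{D}$, the totally ramified one forces $[\Gamma_D:\Gamma_K]\geq n$, and the fundamental relation between residue degree, ramification index, and $\dim_K D$ closes the sandwich. The only (harmless) difference is that you invoke the fundamental \emph{inequality}, so defectlessness is never needed, whereas the paper cites the fundamental \emph{equality} valid over complete discretely valued fields and deduces $[\overline{D}:k]\leq n$ directly from the value-group bound.
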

\begin{proof}
    Since $L/K$ is weakly unramified, $[l:k] = [L:K] =\sqrt{[D:K]}$. Since $D$ contains a totally ramified maximal subfield, $[\Gamma_D: \Gamma_K] \geq \sqrt{[D:K]}$. By fundamental equality(\cite[Equation (2.10)]{wadsworth}, \cite[page 359]{morandi_defective}), 
\begin{align}\label{eqn:fund}
    [D:K] = [\Gamma_D: \Gamma_K] [\overline{D}: k]
    \end{align}
So we conclude that $[\overline{D}: k] \leq \sqrt{[D:K]} = [l:k]$. On the other hand, $l \subseteq \overline{D}$. Therefore, $\overline{D} = l$.
\end{proof}

Recall that any cyclic $p$-algebra of degree $p^m$ over $K$ is of the form :
\begin{align*}
K \langle x_1,\dots,x_m,y : (x_1^p,\dots,x_m^p)=(x_1,\dots,x_m)+\omega,\\ y^{p^m}=b, (yx_1y^{-1},\dots,y x_m y^{-1})=(x_1,\dots,x_m)+(1,0,\dots,0)\rangle
\end{align*}
for some $\omega \in W_m(K)$, $b \in K^{\times}$ and  the `$+$' above  denotes addition rule of the Witt vectors. We denote it by the symbol $[\omega, b)_K$.\\
\indent The following lemma which a simple variant of \cite[Lemma 3.3]{cs}, gives a sufficient condition for a $p$-algebra over $K$ to be a division algebra. The proof is similar to the proof of \cite[Lemma 3.3]{cs}, so we skip it.
\begin{lemma}\label{lem:division}
    Let $b \in K^{\times}$ with $(\mathfrak{v}(b),p) = 1$. Let $\omega \in W_m(K)$ be such that the corresponding cyclic extension $K_{\omega}/K$ is weakly unramified of degree $p^m$.  Then  $[\omega, b)_K$ is a division algebra over $K$.
\end{lemma}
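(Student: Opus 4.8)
The plan is to show that the index of the central simple algebra $A = [\omega,b)_K$ equals its degree $p^m$, which is equivalent to $A$ being a division algebra. Writing $L = K_\omega$, the algebra $A$ is by definition the cyclic algebra $(L/K,\sigma,b)$ attached to the cyclic Galois extension $L/K$ of degree $p^m$ (with $\mathrm{Gal}(L/K) = \langle\sigma\rangle$) and the element $b \in K^{\times}$. Since the exponent divides the index divides the degree for any central simple algebra, it suffices to prove that the exponent of $A$ in the Brauer group is exactly $p^m$. I would use the standard isomorphism $K^{\times}/N_{L/K}(L^{\times}) \xrightarrow{\sim} \mathrm{Br}(L/K)$ sending $c \mapsto [(L/K,\sigma,c)]$, under which $[A]^{\otimes i} = [(L/K,\sigma,b^i)]$; this class is trivial precisely when $b^i \in N_{L/K}(L^{\times})$. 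Thus the exponent of $A$ is the least positive integer $i$ with $b^i \in N_{L/K}(L^{\times})$, and the whole problem reduces to a statement about which powers of $b$ can be norms.

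The key step is a valuation computation controlling $N_{L/K}(L^{\times})$. Because $K$ is complete, the valuation $\mathfrak{v}$ extends uniquely to $L$; call the extension $\mathfrak{v}_L$. Weak unramifiedness means $[\overline{L} : k] = [L:K] = p^m$, so the ramification index is $e_{L/K}=1$, i.e. $\Gamma_L = \Gamma_K$ and $\mathfrak{v}_L|_K = \mathfrak{v}$. Uniqueness of the extension forces each $\tau \in \mathrm{Gal}(L/K)$ to preserve $\mathfrak{v}_L$, so for any $u \in L^{\times}$ one gets $\mathfrak{v}_L(N_{L/K}(u)) = \sum_{\tau}\mathfrak{v}_L(\tau u) = p^m\,\mathfrak{v}_L(u)$. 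Normalizing $\Gamma_K = \mathbb{Z}$, this shows $\mathfrak{v}(N_{L/K}(L^{\times})) \subseteq p^m\mathbb{Z}$. Now $\mathfrak{v}(b^i) = i\,\mathfrak{v}(b)$, and since $(\mathfrak{v}(b),p)=1$ we have $p^m \mid i\,\mathfrak{v}(b)$ if and only if $p^m \mid i$. Hence $b^i \notin N_{L/K}(L^{\times})$ for every $0 < i < p^m$.

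Combining the two steps: no power $b^i$ with $0<i<p^m$ is a norm, so the exponent of $A$ is not a proper divisor of $p^m$; as the exponent divides $\deg A = p^m$, it equals $p^m$. Therefore $p^m = \mathrm{exp}(A) \mid \mathrm{ind}(A) \mid p^m$, forcing $\mathrm{ind}(A) = p^m$, and so $A$ is a division algebra. I expect the main obstacle to be the norm-valuation containment $\mathfrak{v}(N_{L/K}(L^{\times})) \subseteq p^m\mathbb{Z}$: it is exactly here that weak unramifiedness (giving $e_{L/K}=1$) together with the uniqueness of the valuation extension on the complete field $K$ is indispensable, and dropping either hypothesis breaks the containment. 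An equivalent, more hands-on route—presumably the one behind \cite[Lemma 3.3]{cs}—is to define a value function $w$ on $A = \bigoplus_{j=0}^{p^m-1} L\,y^j$ by $w(\sum_j a_j y^j) = \min_j(\mathfrak{v}_L(a_j) + j\,\mathfrak{v}(b)/p^m)$ and to check directly that it is multiplicative; there $(\mathfrak{v}(b),p)=1$ guarantees that the powers of $y$ occupy distinct cosets of $\Gamma_K$ in $\tfrac{1}{p^m}\Gamma_K$, preventing cancellation of leading terms, so that $w$ is a genuine valuation and $A$ has no zero divisors.
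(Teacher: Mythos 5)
Your proof is correct and complete modulo standard facts about cyclic algebras, and it is worth noting that the paper itself writes out no proof of this lemma at all: it simply declares the argument ``similar to the proof of \cite[Lemma 3.3]{cs}'' and skips it, so the only possible comparison is with that citation. Your exponent--norm argument is the standard route for statements of this kind and is presumably what that citation amounts to: identify $[\omega,b)_K$ with the cyclic algebra $(K_\omega/K,\sigma,b)$, invoke $K^{\times}/N_{K_\omega/K}(K_\omega^{\times})\simeq \mathrm{Br}(K_\omega/K)$, and rule out $b^i\in N_{K_\omega/K}(K_\omega^{\times})$ for $0<i<p^m$ by the valuation computation, forcing $\exp(A)=\operatorname{ind}(A)=\deg(A)=p^m$. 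The two places where you use the hypotheses are exactly the right ones: weak unramifiedness $f=[L:K]$ forces $e=1$ via the fundamental inequality $ef\le [L:K]$, and uniqueness of the valuation extension over the complete field $K$ makes $\mathfrak{v}_L$ Galois-invariant, yielding $\mathfrak{v}\bigl(N_{L/K}(L^{\times})\bigr)\subseteq p^m\Gamma_K$; neither step needs separability of the residue extension $l/k$, which is precisely why the ``weakly unramified'' variant (as opposed to the inertial case presumably treated in \cite{cs}) goes through --- this is the content of the paper's ``simple variant'' remark, and your write-up makes that point transparent where the paper leaves it implicit. Your alternative sketch via the value function $w(\sum_j a_j y^j)=\min_j\bigl(\mathfrak{v}_L(a_j)+j\,\mathfrak{v}(b)/p^m\bigr)$ is also a legitimate second route (the standard argument that nicely semiramified crossed products are division), but as written it is only an outline, so the exponent argument should be regarded as your actual proof.
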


\section{Cyclic extensions with inseparable residue  of exponent one}
The goal of this section is to prove the following.
\begin{thm} \label{thm:cyclic}
    Let $K$ be a complete discrete valued field of characteristic $p$ with  valuation $\mathfrak{v}$ and  residue $k$. Suppose $\{\overline{a_1}, \overline{a_2} \cdots,  \overline{a_m}\} \subset k^{\times}$ be elements  that are $p$-independent over $k^{p}$. Then there exists a cyclic extension $L/K$ of degree $p^m$ whose residue  is $l = k(\sqrt[p]{\overline{a_1}}, \sqrt[p]{\overline{a_2}}, \cdots, \sqrt[p]{\overline{a_m})}$.
\end{thm}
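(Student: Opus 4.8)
The plan is to induct on $m$, constructing a tower $K = L_0 \subset L_1 \subset \cdots \subset L_m$ in which $L_j/K$ is cyclic of degree $p^j$ with residue field $l_j = k(\sqrt[p]{\overline{a_1}}, \dots, \sqrt[p]{\overline{a_j}})$. Since $\{\overline{a_1},\dots,\overline{a_m}\}$ is $p$-independent over $k^p$, each $l_j/k$ is purely inseparable of degree exactly $p^j$, and $\overline{a_{j+1}} \notin k^p(\overline{a_1},\dots,\overline{a_j}) = l_j^p$. Fix a uniformizer $\pi$ of $K$ and lifts $a_i \in \mathcal{O}_K^{\times}$ of the $\overline{a_i}$.

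For the base case $m=1$ I would set $L_1 = K(x)$ with $\mathcal{P}(x) = a_1/\pi^p$. Writing $u = \pi x$, the defining equation becomes $u^p - \pi^{p-1}u = a_1$, which forces $u$ to be a unit whose residue satisfies $\overline{u}^{\,p} = \overline{a_1}$; hence $\sqrt[p]{\overline{a_1}} = \overline{u}$ lies in the residue field and generates a purely inseparable extension of degree $p$. As $[L_1:K] \le p$ while the residue degree is already $\ge p$, the fundamental inequality $[L_1:K] \ge e f$ (with $e$ the ramification index and $f$ the residue degree) forces $e=1$, $f=p$ and residue $l_1$; in particular $L_1/K$ is weakly unramified and $\pi$ remains a uniformizer.

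For the inductive step I assume $L_j$ is built, with $Gal(L_j/K) = \langle \sigma \rangle$. Because $l_j/k$ is purely inseparable it admits no nontrivial $k$-automorphism, so $\sigma$ induces the identity on $l_j$; thus $L_j/K$ is weakly unramified with $\pi$ still a uniformizer, and every integral element has trace in $\mathfrak{m}_K$. I would then apply Albert's Theorem~\ref{thm:albert}: pick $\beta \in L_j$ with $Tr_{L_j/K}(\beta)=1$ and $\alpha \in L_j$ with $\mathcal{P}(\beta)=\sigma(\alpha)-\alpha$, so that for each $c \in K$ the field $L_{j+1}^{(c)} = L_j(\gamma)$ with $\mathcal{P}(\gamma)=c+\alpha$ is, by Remark~\ref{lem:albert}, cyclic of degree $p^{j+1}$ over $K$ as soon as $c+\alpha \notin \mathcal{P}(L_j)$. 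Mimicking the base case, I would try $c = a_{j+1}/\pi^p$: if $c+\alpha$ can be arranged to have valuation $-p$ with leading coefficient $\overline{a_{j+1}}$, then scaling $\gamma$ by $\pi$ produces a residue element whose $p$-th power is $\overline{a_{j+1}}$. As $\overline{a_{j+1}} \notin l_j^p$ this element is not a $p$-th power in $l_j$, so the residue grows by exactly $\sqrt[p]{\overline{a_{j+1}}}$; the fundamental inequality again pins down $e=1$, $f=p$ and residue $l_{j+1}$, finishing the induction and yielding $L = L_m$.

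The hard part will be the valuation control hidden in the previous paragraph. Since $\sigma$ is trivial on residues, a trace-$1$ element $\beta$ cannot be integral, so $\mathcal{P}(\beta)=\beta^p-\beta$, and hence $\alpha$, may have valuation far below $-p$, which would swamp the term $a_{j+1}/\pi^p$. The decisive observation is that the trace-$1$ condition determines $\beta$ only up to $(\sigma-1)L_j$: replacing $\beta$ by $\beta-(\sigma(\delta)-\delta)$ changes $\alpha$ by $\mathcal{P}(\delta)$ and preserves $Tr_{L_j/K}(\beta)=1$, so $\alpha$ is free to be reduced modulo $\mathcal{P}(L_j)$. I expect to have to show that after this reduction $\alpha$ can be taken of valuation $> -p$ (or that its part of valuation $\le -p$ has coefficients in $k$ and can be absorbed into $c$), at which point $c=a_{j+1}/\pi^p$ dominates and the residue computation goes through. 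Bounding this reduced valuation is really a statement about the different of the Albert extension $L_{j+1}^{(0)}/L_j$, and it is here that the hypothesis of lifting an \emph{exponent-one} inseparable extension, so that at each stage we adjoin only a single $p$-th root, has to be used decisively; the remaining verifications via the fundamental inequality are then routine.
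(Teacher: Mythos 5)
Your skeleton is exactly the paper's: induction on $m$, the base case via $u=\pi x$ turning $\mathcal{P}(x)=a_1/\pi^p$ into $u^p-\pi^{p-1}u=a_1$, and Albert's Theorem~\ref{thm:albert} together with Remark~\ref{lem:albert} for the inductive step. Your diagnosis of the difficulty is also correct: since $\sigma$ acts trivially on the (purely inseparable) residue extension, a trace-one element $\beta$ cannot be integral, so the Albert element $\alpha$ has uncontrolled negative valuation. But the step you defer --- that $\alpha$ can be reduced modulo $\mathcal{P}(L_j)$ to valuation $>-p$, or that its part of valuation $\le -p$ has coefficients in $k$ and can be absorbed into $c$ --- is a genuine gap, and in fact neither alternative holds in general. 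Because $l_j$ is imperfect, $L_j/\mathcal{P}(L_j)$ contains classes of arbitrarily negative reduced valuation: a leading term $\bar{u}\,\pi^{-N}$ with $p\nmid N$, or with $N=ps$ and $\bar{u}\notin l_j^p$, cannot be removed by subtracting any $\mathcal{P}(\delta)$, and nothing in Albert's construction prevents the class of $\alpha$ from being of this kind; nor need the offending coefficients lie in $k$ (they live in $l_j$), so they cannot be pushed into $c\in K$. If the irreducible part of $\alpha$ has valuation below $-p$, your choice $c=a_{j+1}/\pi^p$ is swamped: the extension is still cyclic of degree $p^{j+1}$ by Remark~\ref{lem:albert}, but its residue/ramification is governed by $\alpha$, not by $a_{j+1}$, and the induction breaks.

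The paper's resolution is the single idea your plan is missing, and it is simpler than controlling $\alpha$: do not fix the exponent of the uniformizer at $p$, but let it depend on $\alpha$ so that $c$ dominates. Having obtained $\alpha$ from Albert's theorem, choose $n\ge 1$ with $p^n\,\mathfrak{v}(\pi) > -\mathfrak{v}(\alpha)$ and set $c=a_{j+1}/\pi^{p^{n}}$; then with $y=\pi^{p^{n-1}}x$ the equation $x^p-x=c+\alpha$ becomes
\[
y^p-\pi^{p^{n}-p^{n-1}}\,y \;=\; a_{j+1}+\pi^{p^{n}}\alpha ,
\]
where now $\pi^{p^{n}}\alpha\in\mathfrak{m}_{L_j}$ by the choice of $n$. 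Hence $y$ is a unit whose residue is $\sqrt[p]{\overline{a_{j+1}}}$, and the fundamental inequality pins down $e=1$, $f=p$ and the residue field exactly as in your routine verification. In short: choose $c$ to dominate $\alpha$ rather than trying to reduce $\alpha$ until it is dominated by $c$. With that one change your argument closes up and coincides with the paper's proof; without it, the inductive step fails.
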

\begin{proof}
    Let $t$ be a uniformizer of $K$ and let $\{a_1, a_2, \cdots, a_m\} \subset \mathcal{O}_K^{\times}$ be arbitrary lifts of  $\{\overline{a_1}, \overline{a_2} \cdots,  \overline{a_m}\} \subset k^{\times}$ .  We will use induction on $m$ to build $L_1, L_2, \cdots, L_m = L$ such that  $L_i/K$ is cyclic and the residue field of  $L_i$ is  $l_i = k(\sqrt[p]{a_1}, \sqrt[p]{a_2}, \cdots, \sqrt[p]{a_i)}$. Let $m=1$. Consider the Artin-Schreier extension $L_1/K$ given by
    \begin{align*}
        x_1^p - x_1 = \frac{a_1}{t^p}
    \end{align*}
    Let $y_1 = tx_1$.  Then $y_1$ satisfies
    \begin{align*}
        y_1^p - (t^{p-1})y_1 = a_1
    \end{align*}
    Clearly $y_1 \in \mathcal{O}_{L_1}^{\times}$ and its residue is 
    \begin{align*}
        \overline{y_1} = \sqrt[p]{a_1}
    \end{align*}
    Therefore the residue field $l_1$ of $L_1$ is given by
    \begin{align*}
        l_1 = k(\sqrt[p]{a_1})
    \end{align*}
    Now assume by induction hypothesis that we have constructed cyclic $L_{i-1}/K$ with residue field $l_{i-1} = k(\sqrt[p]{a_1}, \sqrt[p]{a_2}, \cdots, \sqrt[p]{a_{i-1})}$. By Theorem \ref{thm:albert} and Remark  \ref{lem:albert}, there exists $\alpha_{i-1} \in L_{i-1}$ such that the Artin-Schreier extension $L_i/L_{i-1}$ given by 
    \begin{align}\label{eqn:temp1}
        x_{i}^p - x_{i} = \frac{a_i}{t^{p^{n_i}}} + \alpha_{i-1}
    \end{align}
    is such that $L_i/K$ is cyclic. Choose $n_i \in \mathbb{Z}, n_i \geq 1$ such that $\mathfrak{v}(t^{p^{n_i}})> - \mathfrak{v}(\alpha_{i-1})$ and let $y_i = t^{p^{n_i-1}}x_i$. Then from (\ref{eqn:temp1}), we get
    \begin{align*}
        y_i^p - (t^{p^{n_i}-p^{n_i-1}})y_i = a_i + \beta_{i-1}
    \end{align*}
    where $\beta_{i-1} = t^{p^{n_i}}\alpha_{i-1} \in \mathfrak{m}_{L_{i-1}}$ by the choice of $n_i$. From the above equation, we see that $y_i \in \mathcal{O}_{L_i}^{\times}$ and its residue is given by 
    \begin{align*}
        \overline{y_i} = \sqrt[p]{a_i}
    \end{align*}
    Therefore the residue field $l_i$ of $L_i$ is given by
    \begin{align*}
        l_i = l_{i-1}(\sqrt[p]{a_i}) = k(\sqrt[p]{a_1}, \sqrt[p]{a_2}, \cdots, \sqrt[p]{a_i})
    \end{align*}
    as claimed.
\end{proof}

\section{Proof of Theorem \ref{thm:conj}}
\begin{lemma}\label{lem:disjoint}
     Let $m\in \mathbb{N}$ and let $k$ satisfy either of the following conditions: 
        \begin{enumerate}
\item $rank_p(k) \geq 2m$
\item $rank_p(k) \geq  m$ and $dim_{\mathbb{F}_p}(k/\mathcal{P}(k)) \geq 1$
 \end{enumerate}
 Then there exists weakly unramified cyclic extensions $L_1/K$ and $L_2/K$ of degree $p^m$ whose residue satisfy  $l_1 \cap l_2 = k$.
\end{lemma}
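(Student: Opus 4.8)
The plan is to treat the two hypotheses separately, in each case exhibiting an explicit pair of weakly unramified cyclic extensions of degree $p^m$ whose residue fields meet only in $k$. In the first case both residues will be purely inseparable over $k$, and their intersection will be trivial because of $p$-independence; in the second case one residue will be purely inseparable and the other separable, so that their intersection is forced to be $k$.

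Suppose first that $rank_p(k) \geq 2m$. I can then choose elements $\overline{a_1},\dots,\overline{a_m},\overline{b_1},\dots,\overline{b_m} \in k^{\times}$ that are $p$-independent over $k^p$. Applying Theorem \ref{thm:cyclic} to the first $m$ of them and then to the last $m$ produces cyclic extensions $L_1/K$ and $L_2/K$ of degree $p^m$ with residues $l_1 = k(\sqrt[p]{\overline{a_1}},\dots,\sqrt[p]{\overline{a_m}})$ and $l_2 = k(\sqrt[p]{\overline{b_1}},\dots,\sqrt[p]{\overline{b_m}})$. Since the chosen elements are $p$-independent, $[l_i:k] = p^m = [L_i:K]$, so each $L_i/K$ is weakly unramified. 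As all $2m$ elements are $p$-independent, the compositum satisfies $[l_1 l_2 : k] = p^{2m} = [l_1:k]\,[l_2:k]$, so $l_1$ and $l_2$ are linearly disjoint over $k$, which forces $l_1 \cap l_2 = k$.

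Now suppose $rank_p(k) \geq m$ and $\dim_{\mathbb{F}_p}(k/\mathcal{P}(k)) \geq 1$. Choosing $\overline{a_1},\dots,\overline{a_m} \in k^{\times}$ that are $p$-independent over $k^p$ and applying Theorem \ref{thm:cyclic} gives, exactly as above, a weakly unramified cyclic $L_1/K$ of degree $p^m$ whose residue $l_1$ is purely inseparable over $k$ of degree $p^m$. For $L_2$ I aim instead for a separable residue. The hypothesis $\dim_{\mathbb{F}_p}(k/\mathcal{P}(k)) \geq 1$ provides a cyclic degree $p$ Artin-Schreier extension of $k$; starting from it and applying Theorem \ref{thm:albert} repeatedly---the required trace-one element exists at each stage because the extension in hand is separable---I build a cyclic separable extension $l_2/k$ of degree $p^m$. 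As $K$ is complete, hence Henselian, this $l_2/k$ lifts to an unramified cyclic extension $L_2/K$ of degree $p^m$ with residue $l_2$, and $L_2/K$ is weakly unramified. Finally $l_1 \cap l_2$ is a subfield of $l_1$, hence purely inseparable over $k$, and a subfield of $l_2$, hence separable over $k$; therefore $l_1 \cap l_2 = k$.

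The degree bookkeeping and the standard fact that linear disjointness yields trivial intersection are routine. The only structural point is in the second case: promoting a single Artin-Schreier extension of $k$ to a cyclic separable degree $p^m$ extension. This is precisely the unconditional inductive step supplied by Theorem \ref{thm:albert} (trace surjectivity makes each step available), together with the standard correspondence, for a complete discrete valued field, between finite separable residue extensions and unramified extensions, which preserves Galois groups and hence cyclicity. I expect this build-up-and-lift step to be the main thing to verify, although it presents no real difficulty.
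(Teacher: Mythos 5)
Your proposal is correct and follows essentially the same route as the paper's own proof: split $2m$ $p$-independent elements into two halves and apply Theorem \ref{thm:cyclic} in case (1), and in case (2) pair the purely inseparable residue construction with an Artin--Schreier extension promoted to a cyclic degree $p^m$ extension via Theorem \ref{thm:albert} and lifted inertially. The only difference is that you spell out the justifications (linear disjointness in case (1), separable versus purely inseparable in case (2)) that the paper dismisses as ``clearly.''
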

\begin{proof}
   \begin{enumerate}
       
       \item When $rank_p(k) \geq 2m$: Let $\{\overline{a_1}, \overline{a_2} \cdots,  \overline{a_{2m}}\} \subset k^{\times}$ be elements  that are $p$-independent over $k^{p}$.  Let $l_1=k(\sqrt[p]{\overline{a_1}}, \sqrt[p]{\overline{a_2}}, \cdots, \sqrt[p]{\overline{a_m})}$ and $l_2 = k(\sqrt[p]{\overline{a_{m+1}}}, \sqrt[p]{\overline{a_{m+2}}}, \cdots, \sqrt[p]{\overline{a_{2m}})}$. Then clearly $l_1\cap l_2 = k$. Let $L_i/K, i=1,2$ be the cyclic extensions of degree $p^m$, whose residue is $l_i/k$ as constructed in Theorem \ref{thm:cyclic}. 
       \item When $rank_p(k) \geq  m$ and $dim_{\mathbb{F}_p}(k/\mathcal{P}(k)) \geq 1$: Since $rank_p(k) \geq  m$, there exists $\{\overline{a_1}, \overline{a_2} \cdots,  \overline{a_{m}}\} \subset k^{\times}$  that are $p$-independent over $k^{p}$.  Let $l_1=k(\sqrt[p]{\overline{a_1}}, \sqrt[p]{\overline{a_2}}, \cdots, \sqrt[p]{\overline{a_m})}$. Also since $dim_{\mathbb{F}_p}(k/\mathcal{P}(k)) \geq 1$, there exists an Artin-Schreier extension of $k$. By Theorem \ref{thm:albert}, we can extend this to a cyclic extension $l_2/k$ of degree $p^m$.  Clearly, $l_1\cap l_2 =k$. Let   $L_1/K$ be the cyclic extension of degree $p^m$, whose residue is $l_1/k$ as constructed in Theorem \ref{thm:cyclic} and let $L_2/K$ be the inertial lift of $l_2/k$ which is again cyclic of degree $p^m$. 
       \end{enumerate}
       In both the  cases,  $L_1$ and $L_2$ clearly satisfy the lemma.
\end{proof}

We are now ready to prove Theorem \ref{thm:conj}.

\begin{proof}[Proof of Theorem \ref{thm:conj}:]
   Let   $A$ be a $p$-algebra of degree $p^m$ over $K$ containing a totally ramified purely inseparable maximal subfield. Then by \cite[Lemma 5.1, Remark 5.2]{cs}, $A \simeq [\omega, b)$ for some $\omega \in W_m(F)$, $b \in K^{\times}$ with $(\mathfrak{v}(b),p)=1$ and $K(\sqrt[p^m]{b}) \subset A$ is totally ramified purely inseparable over $K$. Let $L_i/K, i=1,2$ be the degree $p^m$ cyclic extension constructed in Lemma \ref{lem:disjoint}. Let $\omega_i \in W_m(K)$ be Witt vectors corresponding to $L_i$. Since $L_1$ and $L_2$ are weakly unramified, by Lemma \ref{lem:division}, $D_i = [\omega_i, b)_K$, $i=1,2$ are   division algebras over $K$. Moreover, $\overline{D_i} = l_i$ by Lemma \ref{lem:semiramified}. Now $A$, $D_1$ and $D_2$ share  the same purely inseparable subfield $K(\sqrt[p^m]{b})$ and therefore by \cite[Theorem 4.7]{CFM}, share a cyclic maximal subfield $L$. Now the residue $l$ of $L$ satisfies
  \begin{align*}
  l\subseteq \overline{D_1} \cap \overline{D_2} = l_1 \cap l_2 = k
\end{align*}  
By the fundamental equality \cite[Chapter II, \S2, Corollary 1]{serre_local}, $L/K$ is totally ramified as required.
    \end{proof}
\section*{Acknowledgements}
The author acknowledges the support of the DAE, Government of India, under Project Identification No. RTI4001. 
\nocite*{}
\bibliographystyle{alpha}
\bibliography{reference}
\end{document}